\newtheorem{theorem}{Theorem}[section]
\newtheorem{corollary}{Corollary}[theorem]
\newtheorem{conjecture}{Conjecture}[section]
\newtheorem{definition}{Definition}
\newcommand{\Z}{\mathbb{Z}}
\newcommand{\N}{\mathbb{N}}
\renewcommand{\dim}[1]{\text{dim}\left( #1 \right)}
\begin{document}
	\title{Resolution to Sutner's Conjecture}
	\author{William Boyles\thanks{Department of Mathematics, North Carolina State University, Raleigh, NC 27695 (wmboyle2@ncsu.edu)}}
	\date{\today}
	\maketitle
	
	\section{Introduction}
	Consider a game played on a simple graph $G = (V,E)$ where each vertex consists of a clickable light.
	Clicking any vertex $v$ toggles on on/off state of $v$ and its neighbors.
	One wins the game by finding a sequence of clicks that turns off all the lights.
	When $G$ is a $5 \times 5$ grid, this game was commercially available from Tiger Electronics as \textit{Lights Out}.
	
	Sutner was one of the first to study these games mathematically.
	He showed that for any $G$ the initial configuration of all lights on is solvable \cite{Sutner1989}.
	He also found that when $d(G) = \dim{\ker{(A + I)}}$ over the field $\Z_2$, where $A$ is the adjacency matrix of $G$, is 0 all initial configurations are solvable.
	In particular, 1 out of every $2^{d(G)}$ initial configurations are solvable, while each solvable configuration has $2^{d(G)}$ distinct solutions \cite{Sutner1989}.
	When investigating $n \times n$ grid graphs, Sutner conjectured the following relationship:
	\begin{align*}
		d_{2n+1} &= 2d_n + \delta_n \text{, } \delta_n \in \{0,2\} \\
		\delta_{2n+1} &= \delta_n,
	\end{align*}
	where $d_n = d(G)$ for $G$ an $n \times n$ grid graph \cite{Sutner1989}.
	
	We resolve this conjecture in the affirmative.
	We use results from Sutner that give the nullity of a $n \times n$ board as the GCD of two polynomials in the ring $\Z_2[x]$ \cite{Sutner96sigma-automataand}.
	We then apply identities from Hunziker, Machiavelo, and Park that relate the polynomials $(2n+1) \times (2n+1)$ grids and $n \times n$ grids \cite{HUNZIKER2004465}.
	We then apply a result from Ore about the GCD of two products \cite{ore_number_theory}.
	Together, these results allow us to prove Sutner's conjecture.
	We then go further and show for exactly which values of $n$ $\delta_n$ is 0 or 2.
	
	\section{Fibonacci Polynomials}
	\begin{definition}
		Let $f_n(x)$ be the polynomial in the ring $\Z_2[x]$ defined recursively by
		\begin{equation*}
			f_n(x) = \begin{cases}
				0 & n=0 \\
				1 & n=1 \\
				xf_{n-1}(x) + f_{n-2}(x) & \text{otherwise }.
			\end{cases}
		\end{equation*}
	\end{definition}
	These polynomials are often referred to as Fibonacci polynomials because when defined over the reals, evaluating $f_n(x)$ at $x=1$ gives the $n$th Fibonacci number.
	Since coefficients of $f_n(x)$ are either 1 or 0, one can visualize them by coloring squares black or white to represent the coefficients.
	For example,
	\begin{equation*}
		f(6,x) = 1x^5 + 0x^4 + 0x^3 + 0x^2 + 1x + 0 = \blacksquare\square\square\square\square\blacksquare\square.
	\end{equation*}
	Plotting $f(1,x)$ through $f(128,x)$ in the same way, aligning terms of the same degree, we see a Sierpinski triangle rotated 90 degrees.
	
	\begin{figure}[H]
		\centering
		\includegraphics[width=.8\textwidth]{./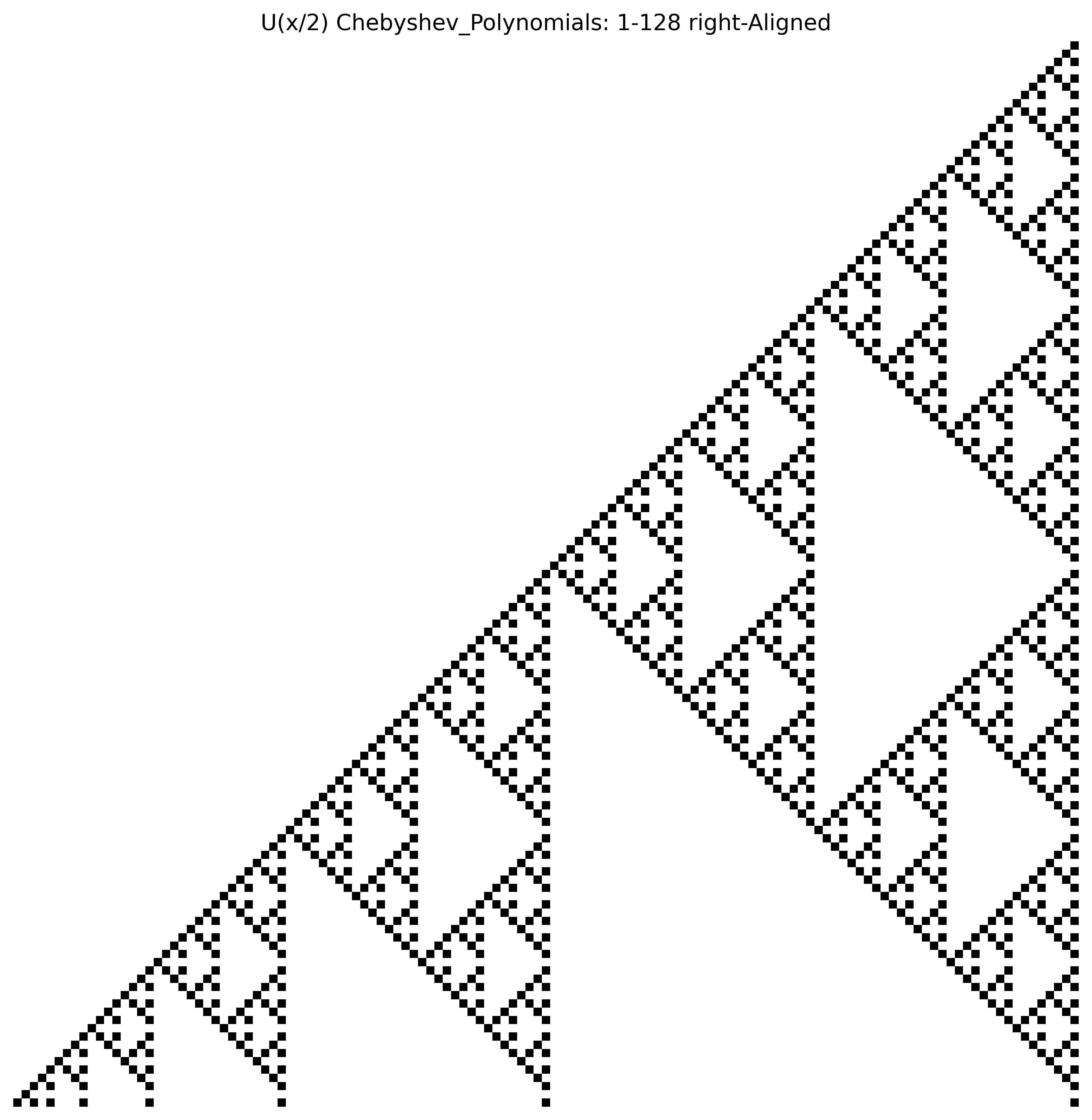}	
	\end{figure}
	
	Sutner, using a well-known connection between the Sierpinski triangle and parity of binomial coefficients notes \cite{Sutner96sigma-automataand}
	\begin{equation*}
		f_n(x) = \sum_{i}{\binom{n+i}{2i+1}x^i \mod 2}.
	\end{equation*}

	Sutner showed how to calculate $d_n$ as the degree of the GCD of two polynomials in $\Z_2[x]$ \cite{Sutner96sigma-automataand}.
	\begin{theorem}[Sutner]\label{Sutner_gcd}
		For all $n \in \N$.
		\begin{equation*}
			d_n = \deg{\gcd\left(f_{n+1}(x), f_{n+1}(x+1)\right)}.
		\end{equation*}
	\end{theorem}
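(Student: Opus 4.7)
The plan is to exploit the Kronecker decomposition of the grid's adjacency matrix, combined with the cyclicity of the path's adjacency matrix, to reduce the kernel computation to a dimension count in a quotient of $\Z_2[x,y]$.

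First, I would note that the $n\times n$ grid is the Cartesian product of two copies of the path $P_n$, so its adjacency matrix decomposes as $A = A_n\otimes I_n + I_n\otimes A_n$, where $A_n$ is the adjacency matrix of $P_n$. Over $\Z_2$ this yields
\begin{equation*}
	A + I \;=\; A_n\otimes I_n + I_n\otimes A_n + I_n\otimes I_n,
\end{equation*}
and $d_n$ is the $\Z_2$-nullity of this operator on $V\otimes V$, with $V = \Z_2^n$. I would then establish two facts about $A_n$ over $\Z_2$: its characteristic polynomial is $f_{n+1}(x)$, obtained by cofactor expansion of $\det(xI_n - A_n)$ along the last row (the recurrence and initial conditions match those of $f_{n+1}$); and $A_n$ is a \emph{cyclic} matrix with $e_1$ a cyclic vector, since the tridiagonal structure forces $A_n^k e_1 = e_{k+1} + (\text{earlier basis vectors})$ for $k<n$. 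Hence its minimal polynomial also equals $f_{n+1}(x)$.

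Cyclicity gives a $\Z_2[A_n]$-module isomorphism $V\cong \Z_2[x]/(f_{n+1}(x))$. Tensoring over $\Z_2$ yields, as a $\Z_2[x,y]$-module,
\begin{equation*}
	V\otimes V \;\cong\; R := \Z_2[x,y]\big/\bigl(f_{n+1}(x),\, f_{n+1}(y)\bigr),
\end{equation*}
with $x$ acting as $A_n\otimes I$ and $y$ as $I\otimes A_n$. Under this identification $A + I$ becomes multiplication by $x+y+1$ on the finite-dimensional $\Z_2$-algebra $R$.

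Finally, applying rank--nullity to the $\Z_2$-linear map ``multiply by $x+y+1$'' on $R$, the kernel and cokernel have equal $\Z_2$-dimension, so $d_n$ equals the $\Z_2$-dimension of
\begin{equation*}
	\Z_2[x,y]\big/\bigl(f_{n+1}(x),\, f_{n+1}(y),\, x+y+1\bigr) \;\cong\; \Z_2[x]\big/\bigl(f_{n+1}(x),\, f_{n+1}(x+1)\bigr) \;\cong\; \Z_2[x]\big/\bigl(\gcd(f_{n+1}(x),\, f_{n+1}(x+1))\bigr),
\end{equation*}
obtained by eliminating $y = x+1$ and using that $\Z_2[x]$ is a PID. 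This quotient has $\Z_2$-dimension $\deg\gcd\bigl(f_{n+1}(x),\, f_{n+1}(x+1)\bigr)$, as desired. The main obstacle is the module identification step -- checking cyclicity of $A_n$ and confirming that, under the tensor isomorphism, $A_n\otimes I$ and $I\otimes A_n$ correspond to the independent variables $x$ and $y$. Once these are in hand, the reduction via rank--nullity and the substitution $y = x+1$ is essentially formal.
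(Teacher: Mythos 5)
The paper does not prove this theorem at all; it is imported as a black box from Sutner's 1996 paper, so there is no internal argument to compare yours against. Judged on its own, your proof is correct and is essentially the standard derivation (the one underlying Sutner's and Hunziker--Machiavelo--Park's treatments). Every step checks out: the Cartesian-product decomposition $A = A_n\otimes I + I\otimes A_n$ is right for the grid; the characteristic polynomial recurrence $p_n = xp_{n-1} + p_{n-2}$ with $p_0 = 1$, $p_1 = x$ does match $f_{n+1}$; the vector $e_1$ is genuinely cyclic for the tridiagonal path matrix (since $A_n^k e_1 = e_{k+1} + (\text{lower terms})$), so the minimal polynomial equals the characteristic polynomial and $V\cong \Z_2[x]/(f_{n+1})$; the tensor identification $V\otimes V\cong \Z_2[x,y]/(f_{n+1}(x), f_{n+1}(y))$ intertwines $A_n\otimes I$ and $I\otimes A_n$ with multiplication by $x$ and $y$ by construction; rank--nullity legitimately converts the kernel dimension into the cokernel dimension; and eliminating $y = x+1$ and invoking that $\Z_2[x]$ is a PID gives $\dim \Z_2[x]/(\gcd(f_{n+1}(x), f_{n+1}(x+1))) = \deg\gcd(f_{n+1}(x), f_{n+1}(x+1))$. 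The only steps you flag as needing verification (cyclicity and the tensor identification) are exactly the right ones to check, and both go through. If anything, your write-up makes the paper more self-contained than it currently is, since the paper leans entirely on the citation here.
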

	
	The recursive definition given in Theorem \ref{Sutner_gcd} and Sutner's formula in terns of the parity of binomial coefficients provide brute force ways to calculate $f_n(x)$.
	Hunziker, Machiavelo, and Park show the following identity that makes calculating $f_n(x)$ easier when $n$ is divisible by powers of 2 \cite{HUNZIKER2004465}.
	
	\begin{theorem}[Hunziker, Machiavelo, and Park]\label{HMP_identity}
		Let $n = b\cdot2^{k}$ where $b$ and $k$ are non-negative integers.
		Then
		\begin{equation*}
			f_n(x) = x^{2^{k}-1} f_{b}^{2^{k}}(x).
		\end{equation*}
	\end{theorem}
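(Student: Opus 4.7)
The plan is to reduce the general statement to the case $k=1$, which is the doubling formula $f_{2m}(x) = x\,f_m(x)^2$ in $\Z_2[x]$, and then iterate by induction on $k$. The characteristic-$2$ setting is essential: squaring is a ring homomorphism (Frobenius), so it commutes with both the coefficient arithmetic of the recurrence and with products of polynomials.

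First I would establish the doubling formula. It turns out that the induction does not close on this identity alone when one unfolds the recurrence, so I would prove it simultaneously with a companion identity for odd indices, namely $f_{2m+1}(x) = f_m(x)^2 + f_{m+1}(x)^2$. Both statements are immediate for $m=0$, and the step from $m=n$ to $m=n+1$ is a short calculation. For the even case,
\begin{equation*}
f_{2n+2}(x) = x\,f_{2n+1}(x) + f_{2n}(x) = x\bigl(f_n^2 + f_{n+1}^2\bigr) + x\,f_n^2 = x\,f_{n+1}^2,
\end{equation*}
where the two copies of $x f_n^2$ cancel in characteristic $2$. For the odd case one similarly rewrites $f_{2n+3}(x) = x\,f_{2n+2}(x) + f_{2n+1}(x) = x^2 f_{n+1}^2 + f_n^2 + f_{n+1}^2$ and recognises $x^2 f_{n+1}^2 + f_n^2$ as $(x f_{n+1} + f_n)^2 = f_{n+2}(x)^2$ by Frobenius, yielding $f_{n+2}^2 + f_{n+1}^2$.

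With the doubling formula in hand, the theorem follows by induction on $k$. The case $k=0$ is trivial. For the step, write $b\cdot 2^{k+1} = 2\cdot(b\cdot 2^k)$ and apply the doubling formula followed by the inductive hypothesis:
\begin{equation*}
f_{b\cdot 2^{k+1}}(x) = x\bigl(f_{b\cdot 2^k}(x)\bigr)^2 = x\bigl(x^{2^{k}-1}\,f_b(x)^{2^{k}}\bigr)^2 = x^{2^{k+1}-1}\,f_b(x)^{2^{k+1}},
\end{equation*}
where the last equality again uses that squaring distributes across a product in $\Z_2[x]$.

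The main obstacle is the first step: choosing the right companion identity so that the simultaneous induction is self-sustaining. Once the pair $(f_{2m},f_{2m+1})$ of identities is guessed, everything reduces to bookkeeping made clean by the Frobenius endomorphism, and the passage from $k=1$ to arbitrary $k$ is purely formal.
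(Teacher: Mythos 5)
Your proof is correct, and there is nothing in the paper to compare it against: the paper states this identity as an imported result of Hunziker, Machiavelo, and Park and gives no proof, so your argument is a genuinely self-contained alternative to citing the literature. The key move --- pairing the doubling formula $f_{2m}(x) = x f_m(x)^2$ with the companion identity $f_{2m+1}(x) = f_m(x)^2 + f_{m+1}(x)^2$ so that the simultaneous induction closes --- is exactly the right strengthening, and the verification checks out: the base cases $f_0 = x f_0^2$ and $f_1 = f_0^2 + f_1^2$ hold, the even step cancels the two copies of $x f_n^2$ in characteristic $2$, and the odd step correctly recognises $x^2 f_{n+1}^2 + f_n^2$ as $(x f_{n+1} + f_n)^2 = f_{n+2}^2$ via Frobenius. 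The outer induction on $k$ is then purely formal, as you say. The one thing worth making explicit if you write this up is that the inner induction is on $m$ with both identities carried along as the induction hypothesis (the even case at $m = n+1$ uses both identities at $m = n$, and the odd case at $m = n+1$ uses the freshly derived even case); as sketched, this is clear enough, but stating the combined induction hypothesis cleanly would remove any ambiguity.
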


	In particular, we will use this result to relate $f_{2n+2}(x)$ and $f_{4n+4}(x)$ to $f_{n+1}(x)$.
	\begin{corollary}\label{cor1}
		The following identities hold:
		\begin{align*}
			f_{2n+2}(x) &= xf_{n+1}^2(x) \\
			f_{4n+4}(x) &= x^3f_{n+1}^4(x).
		\end{align*}
	\end{corollary}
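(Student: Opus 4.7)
The plan is to apply Theorem \ref{HMP_identity} twice, once for each identity, with carefully chosen values of the parameters $b$ and $k$. The statement of the theorem allows any nonnegative integers $b$ and $k$ with $n = b \cdot 2^k$, so there is no restriction requiring $b$ to be odd; I can freely factor out a power of $2$ from the index.

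For the first identity, I would write $2n+2 = (n+1) \cdot 2^1$, i.e.\ take $b = n+1$ and $k = 1$ in Theorem \ref{HMP_identity}. The theorem then yields
\begin{equation*}
    f_{2n+2}(x) = x^{2^1 - 1} f_{n+1}^{2^1}(x) = x \, f_{n+1}^2(x),
\end{equation*}
which is exactly the claim.

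For the second identity, I would instead take $b = n+1$ and $k = 2$, so that $n+1) \cdot 2^2 = 4n+4$. Theorem \ref{HMP_identity} now gives
\begin{equation*}
    f_{4n+4}(x) = x^{2^2 - 1} f_{n+1}^{2^2}(x) = x^3 \, f_{n+1}^4(x),
\end{equation*}
again matching the claim.

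There is no real obstacle here: the corollary is a pure specialization of the Hunziker--Machiavelo--Park identity, and the only decision to make is the choice of $(b,k)$. One minor sanity check worth noting is that the HMP identity is consistent across different factorizations of the same index (e.g.\ if $n+1$ is itself even one could alternatively use $k=2$ in the first identity), but since the statement of Theorem \ref{HMP_identity} is valid for every valid decomposition, the computation above applies uniformly for all $n \in \N$.
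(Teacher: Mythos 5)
Your proof is correct and takes exactly the same approach as the paper: decompose $2n+2 = (n+1)\cdot 2^1$ and $4n+4 = (n+1)\cdot 2^2$ and specialize Theorem \ref{HMP_identity}. The paper's own proof is just a terser version of the same two-line computation.
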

	\begin{proof}
		Notice that $2n+2 = (n+1)2^1$ and $4n+4 = (n+1)2^{2}$.
		Thus, our desired identities follow from Theorem \ref{HMP_identity}.
	\end{proof}

	Now that we have a way to express $f_{2n+2}(x)$ and $f_{4n+4}(x)$ as a product of $f_{n+1}(x)$ and a power of $x$, we need a way to express the GCD of products so we can relate $d_{2n+1}$ and $d_n$.
	This is exactly what a number-theoretic result from Ore provides \cite{ore_number_theory}.
	
	\begin{theorem}[Ore]\label{Ore_gcd}
		Let $a$, $b$, $c$, and $d$ be integers.
		Let $(a,b)$ denote $\gcd{(a,b)}$.
		Then
		\begin{equation*}
			(ab,cd) = (a,c)(b,d)\left(\frac{a}{(a,c)},\frac{d}{(b,d)}\right)\left(\frac{c}{(a,c)},\frac{b}{(b,d)}\right).
		\end{equation*}
	\end{theorem}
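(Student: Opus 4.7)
The plan is to reduce the identity to a local verification at each prime via $p$-adic valuations. Greatest common divisors in $\Z$ (and more generally in any UFD, including $\Z_2[x]$, where this identity is ultimately applied) are determined prime-by-prime, so it suffices to show that for every prime $p$ the two sides of the claimed equation have equal $p$-adic valuation.

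Let $v_p$ denote the $p$-adic valuation and set $\alpha = v_p(a)$, $\beta = v_p(b)$, $\gamma = v_p(c)$, $\delta = v_p(d)$. Using $v_p(xy) = v_p(x) + v_p(y)$, $v_p(\gcd(x,y)) = \min(v_p(x), v_p(y))$, and consequently $v_p(x/\gcd(x,y)) = \max(0,\, v_p(x) - v_p(y))$, the left-hand side acquires valuation $\min(\alpha+\beta,\, \gamma+\delta)$, while the right-hand side acquires valuation
\begin{equation*}
\min(\alpha,\gamma) + \min(\beta,\delta) + \min\bigl(\max(0, \alpha - \gamma),\, \max(0, \delta - \beta)\bigr) + \min\bigl(\max(0, \gamma - \alpha),\, \max(0, \beta - \delta)\bigr).
\end{equation*}

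The remaining task is to unfold these nested expressions. The right-hand side is symmetric under the simultaneous swap $(\alpha,\beta) \leftrightarrow (\gamma,\delta)$ (which interchanges the two correction terms), so without loss of generality I may assume $\alpha \ge \gamma$. This makes the last of the four terms vanish and strips the outer $\max$ from the third, collapsing the sum to $\gamma + \min(\beta,\delta) + \min(\alpha - \gamma,\, \max(0, \delta - \beta))$. A short case split on whether $\beta \le \delta$ or $\beta > \delta$ then reduces this to $\min(\alpha+\beta,\, \gamma+\delta)$ in each case, matching the left-hand side.

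The main obstacle is purely notational: the two correction factors in Ore's identity look intimidating, and it is easy to miscount cases or drop a $\max$ during the symmetry reduction. Conceptually, however, once one commits to the valuation framework, the identity reduces to elementary manipulation of $\min$ and $\max$ over the integers, with no dependence on the Fibonacci-polynomial machinery of the rest of the paper.
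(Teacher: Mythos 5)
Your proof is correct, but note that the paper does not actually prove this statement at all: it imports it from Ore's \emph{Number Theory and Its History} as a black box, and then merely \emph{asserts} that the identity transfers from $\Z$ to $\Z_2[x]$ because both are Euclidean domains. Your valuation argument is therefore not a variant of the paper's proof but a replacement for a citation, and it is arguably a better fit for the paper's needs: by working prime-by-prime with $v_p$ and reducing everything to the identity $\min(\alpha,\gamma)+\min(\beta,\delta)+\min(\max(0,\alpha-\gamma),\max(0,\delta-\beta))+\min(\max(0,\gamma-\alpha),\max(0,\beta-\delta))=\min(\alpha+\beta,\gamma+\delta)$, you prove the statement uniformly in any UFD (up to units), so the transfer to $\Z_2[x]$ that the paper hand-waves is automatic --- one just replaces primes of $\Z$ by irreducible polynomials. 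I checked your case analysis: the swap $(\alpha,\beta)\leftrightarrow(\gamma,\delta)$ does exchange the two correction terms and fix everything else, assuming $\alpha\ge\gamma$ does kill the fourth term and reduce the third to $\min(\alpha-\gamma,\max(0,\delta-\beta))$, and both subcases $\beta\le\delta$ and $\beta>\delta$ land on $\min(\alpha+\beta,\gamma+\delta)$. Two small points worth a sentence each in a polished write-up: (1) you should assume $a,b,c,d$ are nonzero (otherwise some valuations are $+\infty$ and the quotients are undefined; the paper only ever applies the identity to nonzero polynomials, so this costs nothing), and (2) equality of all $p$-adic valuations pins the two sides down only up to a unit, which is harmless since gcds are only defined up to units anyway, and in $\Z_2[x]$ the only unit is $1$.
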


	Although Ore's result deals specifically with integers, both the integers and $\Z_2[x]$ are Euclidean domains, so the result will still hold.
	
	Hunziker, Machiavelo, and Park also showed the following identity \cite{HUNZIKER2004465}.
	\begin{theorem}[Hunziker, Machiavelo, and Park]\label{HMP_gcd}
		A polynomial $\tau(x)$ in $\Z_2[x]$ divides both $f_n(x)$ and $f_m(x)$ if and only if it divides $f_{\gcd{(m,n)}}$.
		In particular,
		\begin{equation*}
			\gcd{\left(f_m(x), f_n(x)\right)} = f_{\gcd{(m,n)}}(x).
		\end{equation*}
	\end{theorem}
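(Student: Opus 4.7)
The plan is to mimic the classical proof that $\gcd(F_m, F_n) = F_{\gcd(m,n)}$ for the integer Fibonacci sequence: one needs an addition-type identity expressing $f_{m+n}$ in terms of smaller-index $f_i$, together with the coprimality of consecutive Fibonacci polynomials. These two ingredients together allow us to run the Euclidean algorithm on indices at the level of polynomials in $\Z_2[x]$.

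First I would establish the addition formula
\begin{equation*}
	f_{m+n}(x) = f_{m+1}(x) f_n(x) + f_m(x) f_{n-1}(x)
\end{equation*}
by induction on $n$ with $m$ held fixed. The cases $n=1$ and $n=2$ follow immediately from the defining recursion. For the inductive step one expands $f_{m+n+1}(x) = x f_{m+n}(x) + f_{m+n-1}(x)$, substitutes the inductive hypotheses for the two terms, and regroups using $x f_n(x) + f_{n-1}(x) = f_{n+1}(x)$ to arrive at $f_{m+1}(x) f_{n+1}(x) + f_m(x) f_n(x)$, which is exactly the target identity with $n$ replaced by $n+1$.

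Next I would show that $\gcd(f_n(x), f_{n-1}(x)) = 1$ for all $n \geq 1$. The defining recursion gives $\gcd(f_n, f_{n-1}) = \gcd(f_{n-1}, f_{n-2})$, and iterating yields $\gcd(f_1, f_0) = \gcd(1,0) = 1$. Combined with the addition formula in the form $f_m(x) \equiv f_{m-n}(x) f_{n-1}(x) \pmod{f_n(x)}$ (valid when $m > n$), coprimality of $f_{n-1}$ and $f_n$ gives $\gcd(f_m(x), f_n(x)) = \gcd(f_{m-n}(x), f_n(x))$. Repeating this reduction mirrors the Euclidean algorithm on indices and eventually terminates at $\gcd(f_{\gcd(m,n)}(x), f_0(x)) = f_{\gcd(m,n)}(x)$, proving the displayed GCD equality.

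For the sharper ``$\tau$ divides both $f_m$ and $f_n$ if and only if $\tau$ divides $f_{\gcd(m,n)}$'' formulation, the forward direction is immediate from the GCD equality. For the converse I would separately verify that $d \mid n$ implies $f_d(x) \mid f_n(x)$, by inducting on $k$ where $n = kd$: the addition formula gives $f_{kd}(x) = f_{(k-1)d+1}(x) f_d(x) + f_{(k-1)d}(x) f_{d-1}(x)$, and the second summand is divisible by $f_d(x)$ by the inductive hypothesis. The main obstacle is really just the verification of the addition formula; once it is in hand, everything else is the standard Euclidean bookkeeping transplanted into $\Z_2[x]$.
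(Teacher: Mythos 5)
The paper does not prove this statement at all: it is quoted from Hunziker, Machiavelo, and Park and used as a black box, so there is no internal proof to compare against. Your argument is correct and self-contained. The addition formula $f_{m+n}(x) = f_{m+1}(x)f_n(x) + f_m(x)f_{n-1}(x)$ checks out by the induction you describe (the base cases $n=1,2$ reduce to $f_1 = 1$, $f_0 = 0$ and the defining recursion), consecutive Fibonacci polynomials are coprime by the Euclidean step built into the recursion, and the congruence $f_m \equiv f_{m-n}f_{n-1} \pmod{f_n}$ then runs the subtractive Euclidean algorithm on indices down to $\gcd(f_{\gcd(m,n)}, f_0) = f_{\gcd(m,n)}$. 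The lemma $d \mid n \Rightarrow f_d(x) \mid f_n(x)$ closes the ``if'' direction of the first sentence, and the ``only if'' direction follows from the displayed gcd identity. Two small points worth making explicit in a written version: in $\Z_2[x]$ the only unit is $1$, so the gcd is literally unique and no normalization by leading coefficients is needed; and the step $\gcd(f_{m-n}f_{n-1}, f_n) = \gcd(f_{m-n}, f_n)$ uses that $\Z_2[x]$ is a unique factorization domain together with $\gcd(f_{n-1}, f_n) = 1$. This is the classical integer-Fibonacci argument transplanted to $\Z_2[x]$, which is presumably close to how the cited source proves it; supplying it makes the paper's chain of reasoning self-contained, which is a genuine improvement over citing the result outright.
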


	We specifically will use the following corollary:
	\begin{corollary}\label{cor_HMP_gcd}
		For some polynomial $\tau(x)$ in $\Z_2[x]$, let $n \geq 0$ be the smallest integer such that $\tau(x)$ divides $f_n(x)$.
		Then for all $m \geq 0$, $\tau(x)$ divides $f_m(x)$ if and only if $n$ divides $m$.
	\end{corollary}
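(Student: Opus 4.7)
The plan is to deduce the corollary directly from Theorem \ref{HMP_gcd}, which already tells us that $\gcd(f_m(x), f_n(x)) = f_{\gcd(m,n)}(x)$. The minimality of $n$ combined with the GCD identity should force $\gcd(m,n)$ to coincide with $n$ whenever $\tau$ divides $f_m$.

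For the forward direction, I would assume $\tau(x) \mid f_m(x)$. Since we also have $\tau(x) \mid f_n(x)$, any common divisor property applies: $\tau(x)$ divides $\gcd(f_m(x), f_n(x))$, which by Theorem \ref{HMP_gcd} equals $f_{\gcd(m,n)}(x)$. Let $g = \gcd(m,n)$. Then $\tau(x) \mid f_g(x)$, and clearly $g \le n$. By the minimality of $n$ among indices for which $\tau(x) \mid f_n(x)$, we must have $g = n$, i.e.\ $n \mid m$.

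For the reverse direction, if $n \mid m$ then $\gcd(m,n) = n$, so Theorem \ref{HMP_gcd} gives $\gcd(f_m(x), f_n(x)) = f_n(x)$. In particular $f_n(x) \mid f_m(x)$, and since $\tau(x) \mid f_n(x)$, transitivity of divisibility yields $\tau(x) \mid f_m(x)$.

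I do not anticipate a serious obstacle; the only subtlety is the degenerate case. Since $f_0(x) = 0$, every polynomial divides $f_0$, so the ``smallest $n \geq 0$'' in the hypothesis must be interpreted as the smallest \emph{positive} index (equivalently, $\tau(x)$ is assumed not to be a unit and the trivial case $n=0$ is excluded). Once this is noted, both bounds $\gcd(m,n) \le n$ and the minimality condition apply unambiguously and the argument goes through cleanly.
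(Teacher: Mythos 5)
Your proof is correct and takes essentially the same approach as the paper: both directions apply Theorem \ref{HMP_gcd} together with the minimality of $n$ exactly as the paper does. Your side remark about the degenerate index $n=0$ (since $f_0(x)=0$ is divisible by every polynomial, ``smallest'' must mean smallest positive index) is a legitimate subtlety that the paper glosses over, but it does not alter the argument.
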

	\begin{proof}
		Let $\tau(x)$ be some polynomial in $\Z_2[x]$.
		Let $f_n(x)$ be the smallest Fibonacci polynomial such that $\tau(x)$ divides $f_n(x)$.
		
		Assume that $\tau(x)$ divides $f_m(x)$ for some number $m$.
		Then $\tau(x)$ is a common factor of $f_m(x)$ and $f_n(x)$, so Theorem \ref{HMP_gcd} tells us that $\tau(x)$ divides $f_{\gcd{(m,n)}}(x)$.
		Since $f_n(x)$ is the smallest Fibonacci polynomial that is divisible by $\tau(x)$,
		\begin{equation*}
			\gcd{(m,n)} \geq n.
		\end{equation*}
		This inequality only holds if $\gcd{(m,n)} = n$.
		Thus, $m$ must be a multiple of $n$ as desired.
		
		Now assume that $m$ is a multiple of $n$.
		Then $\gcd{(m,n)} = n$.
		Theorem \ref{HMP_gcd} tells us
		\begin{equation*}
			\gcd{\left(f_m(x), f_n(x)\right)} = f_{\gcd{(m,n)}}(x) = f_n(x).
		\end{equation*}
		Since $\tau(x)$ divides $f_n(x)$, and $f_n(x)$ is the GCD of $f_m(x)$ and $f_n(x)$, $\tau(x)$ must also divide $f_m(x)$, as desired.
	\end{proof}

	In particular, we will use the following instances of Corollary \ref{cor_HMP_gcd} to determine when $\delta_n$ is 0 or 2.
	
	\begin{corollary}\label{cor_cor_HMP_gcd}
		The following are true:
		\begin{enumerate}[label=(\roman*)]
			\item $x$ divides $f_n(x)$ if and only if $n \equiv 0 \mod 2$.
			\item $x+1$ divides $f_n(x+1)$ if and only if $n \equiv 0 \mod 2$.
			\item $x+1$ divides $f_n(x)$ if and only if $n \equiv 0 \mod 3$.
			\item $x$ divides $f_n(x+1)$ if and only if $n \equiv 0 \mod 3$.
		\end{enumerate}
	\end{corollary}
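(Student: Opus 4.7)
The plan is to apply Corollary~\ref{cor_HMP_gcd} in each case, after first computing enough of the Fibonacci polynomial sequence to identify the smallest $n\ge 1$ for which the relevant divisor shows up.

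First I would record the initial polynomials: $f_0(x)=0$, $f_1(x)=1$, $f_2(x)=x$, and $f_3(x)=x\cdot x+1=x^2+1=(x+1)^2$ in $\Z_2[x]$. This immediately handles the two ``direct'' parts. For (i), $x\nmid f_1=1$ but $x\mid f_2=x$, so $n_0=2$ is the smallest index for which $x$ divides $f_{n_0}(x)$; by Corollary~\ref{cor_HMP_gcd}, $x\mid f_n(x)$ iff $2\mid n$. For (iii), $(x+1)\nmid 1$ and $(x+1)\nmid x$, but $(x+1)\mid (x+1)^2 = f_3(x)$, so $n_0=3$, and Corollary~\ref{cor_HMP_gcd} gives $(x+1)\mid f_n(x)$ iff $3\mid n$.

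For (ii) and (iv), Corollary~\ref{cor_HMP_gcd} does not apply directly because $f_n(x+1)$ is not itself a Fibonacci polynomial, so I would instead reduce each to a statement about $f_n(x)$ using the root criterion in $\Z_2[x]$: a linear polynomial $x-a$ divides $g(x)$ iff $g(a)=0$. For (ii), $(x+1)\mid f_n(x+1)$ iff $f_n(x+1)$ vanishes at $x=1$, i.e.\ $f_n(1+1)=f_n(0)=0$, i.e.\ $x\mid f_n(x)$, which by (i) is equivalent to $2\mid n$. For (iv), $x\mid f_n(x+1)$ iff $f_n(x+1)$ vanishes at $x=0$, i.e.\ $f_n(1)=0$, i.e.\ $(x+1)\mid f_n(x)$ (using $-1=1$ in $\Z_2$), which by (iii) is equivalent to $3\mid n$.

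There is no real obstacle here; the one subtle point is that Corollary~\ref{cor_HMP_gcd} needs the smallest positive $n$ with $\tau\mid f_n$, not $n=0$ (since $f_0=0$ is trivially divisible by every $\tau$), so I would state the minimality explicitly starting from $n=1$ in each application. Everything else is a short check of $f_1, f_2, f_3$ together with the substitution trick to convert $f_n(x+1)$-divisibility into $f_n(x)$-divisibility.
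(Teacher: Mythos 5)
Your proposal is correct and takes essentially the same route as the paper: identify $f_2(x)=x$ and $f_3(x)=(x+1)^2$ as the minimal (positive-index) Fibonacci polynomials divisible by $x$ and $x+1$ respectively, apply Corollary~\ref{cor_HMP_gcd}, and deduce (ii) and (iv) from (i) and (iii). Your root-criterion argument for (ii) and (iv) is just an explicit version of the paper's ``substitute $x+1$ for $x$'' step, and your remark that the minimality in Corollary~\ref{cor_HMP_gcd} must be taken over $n\geq 1$ (since $f_0=0$ is divisible by everything) is a worthwhile precision that the paper glosses over.
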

	\begin{proof}
		Notice,
		\begin{enumerate}[label=(\roman*)]
			\item
				We find that $f_2(x) = x$ is the smallest Fibonacci polynomial divisible by $x$, so we apply Corollary \ref{cor_HMP_gcd} to get the desired result.
			\item
				Follows from (i) by substituting $x+1$ for $x$.
			\item
				We find that $f_3(x) = (x+1)^2$ is the smallest Fibonacci polynomial divisible by $x+1$, so we apply Corollary \ref{cor_HMP_gcd} to get the desired result.
			\item
				Follows from (iii) by substituting $x+1$ for $x$.
		\end{enumerate}
	\end{proof}

	\section{Proof of Sutner's Conjecture}
	Finally, we are ready to prove Sutner's conjecture \cite{Sutner1989}. 
	\begin{theorem}\label{sutners-thm}
		For all $n \in \N$,
		\begin{equation*}
			d_{2n+1} = 2d_n + \delta_n,
		\end{equation*}
		where $\delta_n \in \{0,2\}$, and $\delta_{2n+1} = \delta_n$.
	\end{theorem}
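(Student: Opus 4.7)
By Theorem~\ref{Sutner_gcd} combined with Corollary~\ref{cor1},
\begin{equation*}
d_{2n+1} = \deg\gcd\!\bigl(x\,f_{n+1}^{2}(x),\,(x+1)\,f_{n+1}^{2}(x+1)\bigr).
\end{equation*}
Writing $p := f_{n+1}(x)$ and $q := f_{n+1}(x+1)$, I plan to apply Ore's identity (Theorem~\ref{Ore_gcd}) with the splitting $ab = x\cdot p^{2}$, $cd = (x+1)\cdot q^{2}$. Since $\gcd(x,x+1)=1$ in $\Z_2[x]$ and $\gcd(p^{2},q^{2})=\gcd(p,q)^{2}$ in the UFD $\Z_2[x]$, Ore's identity collapses to
\begin{equation*}
\gcd\!\bigl(xp^{2},(x+1)q^{2}\bigr) = \gcd(p,q)^{2}\cdot \gcd\!\bigl(x,\, q^{2}/\gcd(p,q)^{2}\bigr)\cdot \gcd\!\bigl(x+1,\, p^{2}/\gcd(p,q)^{2}\bigr).
\end{equation*}
Taking degrees gives $d_{2n+1} = 2d_n + \delta_n$, where $\delta_n$ is the sum of the degrees of the last two factors; each is $0$ or $1$, so a priori $\delta_n\in\{0,1,2\}$.

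The heart of the argument is ruling out $\delta_n = 1$. For this I propose to use the ring automorphism $\sigma\colon \Z_2[x]\to\Z_2[x]$, $h(x)\mapsto h(x+1)$, which has order two since we are in characteristic $2$. Because $\sigma(p)=q$, $\sigma(q)=f_{n+1}(x+2)=p$, and $\sigma(x)=x+1$, $\sigma$ fixes $\gcd(p,q)^{2}$ and interchanges the two Ore remainder factors. Since $\sigma$ preserves degree, the two summands in $\delta_n$ are equal, so $\delta_n$ is even, hence $\delta_n\in\{0,2\}$. Writing $v_x$ for the $x$-adic valuation, this yields the clean criterion $\delta_n = 2$ if and only if $v_x(q) > v_x(p)$.

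To finish with $\delta_{2n+1}=\delta_n$, I will apply the same criterion at the index $2n+1$. By Corollary~\ref{cor1}, $f_{2n+2}(x)=xp^{2}$ and $f_{2n+2}(x+1)=(x+1)q^{2}$, so $v_x(f_{2n+2}(x))=1+2v_x(p)$ while $v_x(f_{2n+2}(x+1))=2v_x(q)$. The inequality $2v_x(q) > 1+2v_x(p)$ then rearranges, using integrality, to $v_x(q)>v_x(p)$, giving $\delta_{2n+1}=2 \iff \delta_n=2$. I expect the main obstacle to be the middle step: without the $\sigma$-symmetry the two Ore remainder terms look independent, and the divisibility criteria of Corollary~\ref{cor_cor_HMP_gcd} see only whether $x$ divides $p$ or $q$, not the finer valuation comparison that decides which element of $\{0,1,2\}$ the quantity $\delta_n$ attains.
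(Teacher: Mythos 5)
Your proposal is correct. The first half --- applying Corollary~\ref{cor1} and Ore's identity with the splitting $a=x$, $b=f_{n+1}^{2}(x)$, $c=x+1$, $d=f_{n+1}^{2}(x+1)$, and then using the substitution $x\mapsto x+1$ (your involution $\sigma$) to show the two remainder factors have equal degree, forcing $\delta_n$ to be even --- is essentially the paper's argument, just phrased more explicitly as a degree-preserving ring automorphism. Where you genuinely diverge is the proof that $\delta_{2n+1}=\delta_n$. The paper computes $d_{4n+3}$ twice: once directly from $f_{4n+4}(x)=x^{3}f_{n+1}^{4}(x)$ via a second, heavier round of Ore's identity, obtaining $4d_n+3\delta_n$, and once by iterating the recurrence, obtaining $4d_n+2\delta_n+\delta_{2n+1}$; equating the two gives the claim. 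You instead extract the valuation criterion $\delta_n=2$ if and only if $v_x(f_{n+1}(x+1))>v_x(f_{n+1}(x))$ and check it propagates under $n\mapsto 2n+1$ from $v_x(f_{2n+2}(x))=1+2v_x(p)$ and $v_x(f_{2n+2}(x+1))=2v_x(q)$ together with a parity/integrality observation; this is valid and avoids the $4n+3$ computation entirely. It has the added benefit that your criterion is precisely the quantity the paper analyzes later in Theorem~\ref{when-is-deltan-2}, so your bookkeeping would also streamline that proof. Two small points to make explicit in a write-up: the identity $\sigma(\gcd(p,q))=\gcd(\sigma p,\sigma q)=\gcd(q,p)=\gcd(p,q)$ uses that the only unit of $\Z_2[x]$ is $1$, so the gcd is genuinely fixed and not merely fixed up to a unit; and the reduction $\gcd(x,q^{2}/\gcd(p,q)^{2})$ having degree $0$ or $1$ relies on $x$ being irreducible, which is what lets the two factors contribute only $0$ or $1$ each before the symmetry argument rules out the total being $1$.
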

	\begin{proof}
		Let $(a,b)$ denote $\gcd{(a,b)}$.
		Applying the results from Theorems \ref{Sutner_gcd}, \ref{HMP_identity}, and \ref{Ore_gcd},
		\begin{align*}
			d_{2n+1} &= \deg \left(f_{2n+2}(x), f_{2n+2}(x+1)\right) \\
				&= \deg \left(xf^2_{n+1}(x), (x+1)f^2_{n+1}(x+1)\right) \\
				&= \deg (x,x+1) \left(f^2_{n+1}(x),f^2_{n+1}(x+1)\right) \left(\frac{x+1}{(x,x+1)},\frac{f^2_{n+1}(x)}{(f^2_{n+1}(x),f^2_{n+1}(x+1))}\right) \left(\frac{x}{(x,x+1)},\frac{f^2_{n+1}(x+1)}{(f^2_{n+1}(x),f^2_{n+1}(x+1))}\right) \\
				&= \deg \left(f_{n+1}(x),f_{n+1}(x+1)\right)^2 \left(x+1,\frac{f^2_{n+1}(x)}{(f_{n+1}(x),f_{n+1}(x+1))^2}\right) \left(x,\frac{f^2_{n+1}(x+1)}{(f_{n+1}(x),f_{n+1}(x+1))^2}\right) \\
				&= \deg \left(f_{n+1}(x),f_{n+1}(x+1)\right)^2 \left(x+1,\frac{f_{n+1}(x)}{(f_{n+1}(x),f_{n+1}(x+1))}\right) \left(x,\frac{f_{n+1}(x+1)}{(f_{n+1}(x),f_{n+1}(x+1))}\right) \\
				&= 2d_n + \deg\left(x+1,\frac{f_{n+1}(x)}{(f_{n+1}(x),f_{n+1}(x+1))}\right) \left(x,\frac{f_{n+1}(x+1)}{(f_{n+1}(x),f_{n+1}(x+1))}\right).
		\end{align*}
		Notice that if we substitute $x+1$ for $x$,
		\begin{equation*}
			\left(x+1,\frac{f_{n+1}(x)}{(f_{n+1}(x+1),f_{n+1}(x))}\right) \text{ becomes } \left(x,\frac{f_{n+1}(x+1)}{(f_{n+1}(x),f_{n+1}(x+1))}\right).
		\end{equation*}
		Thus, we see that these two remaining GCD terms in our expression for $d_{2n+1}$ are either both 1 or not 1 simultaneously.
		This means we can further simplify to
		\begin{equation*}
			d_{2n+1} = 2d_{n} + 2\deg \left(x,\frac{f_{n+1}(x+1)}{(f_{n+1}(x),f_{n+1}(x+1))}\right).
		\end{equation*}
		So, we see that
		\begin{equation*}
			d_{2n+1} = 2d_{n} + \delta_n \text{, where }\delta_n = 2\deg \left(x,\frac{f_{n+1}(x+1)}{(f_{n+1}(x),f_{n+1}(x+1))}\right).
		\end{equation*}
		Thus, $\delta_n \in \{0,2\}$ as desired.
		
		What remains is to show that $\delta_{n} = \delta_{2n+1}$.
		Applying Corollary \ref{cor1},
		\begin{align*}
			d_{4n+3} &= \deg \left(x^3f^4_{n+1}(x),(x+1)^3f^4_{n+1}(x+1)\right) \\
				&= \deg \left(x^3,(x+1)^3\right) \left(f^4_{n+1}(x),f^4_{n+1}(x+1)\right) \left(x^3,\frac{f^4_{n+1}(x+1)}{(f^4_{n+1}(x),f^4_{n+1}(x+1))}\right) \left((x+1)^3,\frac{f^4_{n+1}(x)}{(f^4_{n+1}(x),f^4_{n+1}(x+1))}\right) \\
				&= \deg \left(f_{n+1}(x),f_{n+1}(x+1)\right)^4 \left(x^3,\frac{f^4_{n+1}(x+1)}{(f_{n+1}(x),f_{n+1}(x+1))^4}\right) \left((x+1)^3,\frac{f^4_{n+1}(x)}{(f_{n+1}(x),f_{n+1}(x+1))^4}\right) \\
				&= \deg \left(f_{n+1}(x),f_{n+1}(x+1)\right)^4 \left(x^3,\frac{f^3_{n+1}(x+1)}{(f_{n+1}(x),f_{n+1}(x+1))^3}\right) \left((x+1)^3,\frac{f^3_{n+1}(x)}{(f_{n+1}(x),f_{n+1}(x+1))^3}\right) \\
				&= \deg \left(f_{n+1}(x),f_{n+1}(x+1)\right)^4 \left(x,\frac{f_{n+1}(x+1)}{(f_{n+1}(x),f_{n+1}(x+1))}\right)^3 \left(x+1,\frac{f_{n+1}(x)}{(f_{n+1}(x),f_{n+1}(x+1))}\right)^3 \\
				&= 4d_n + 3\delta_n.
		\end{align*}
		Also, from our work previously in this proof,
		\begin{align*}
			d_{4n+3} &= d_{2(2n+1) + 1} \\
				&= 2 d_{2n+1} + \delta_{2n+1} \\
				&= 2 \left(2d_{n} + \delta_{n}\right) + \delta_{2n+1} \\
				&= 4d_{n} + 2\delta_{n} + \delta_{2n+1}.
		\end{align*}
		For these two expressions for $d_{4n+3}$ to be equal, we must have $\delta_{2n+1} = \delta_n$, as desired.
	\end{proof}

	This result may have been proven prior by Yamagishi \cite{YAMAGISHI20151}.
	However, Yamagishi does not mention the connection to Sutner's conjecture, and the proof provided is not as direct as the one we provide because Yamagishi's work is concerned with tori rather than grids.
	
	\section{When does $\delta_n = 2$?}
	Theorem \ref{sutners-thm} proves Sutner's conjecture as stated and even gives a formula for finding $\delta_n$.
	However, this formula is somewhat messy, containing one polynomial division and two polynomial GCDs.
	We can improve this formula to just a modulo operation on $n$.
	We'll do so by using the divisibility properties established in Corollary \ref{cor_cor_HMP_gcd}.
	
	\begin{theorem}\label{when-is-deltan-2}
		The value of $\delta_n$ is 2 if and only if $n+1$ is divisible by 3.
	\end{theorem}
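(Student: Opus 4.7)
The plan is to work directly with the explicit formula for $\delta_n$ established at the end of the proof of Theorem \ref{sutners-thm}, namely
\[
\delta_n = 2\,\deg\gcd\!\left(x,\; \frac{f_{n+1}(x+1)}{\gcd(f_{n+1}(x),f_{n+1}(x+1))}\right),
\]
and reformulate this in terms of $x$-adic valuations of $f_{n+1}(x)$ and $f_{n+1}(x+1)$. Writing $a = v_x(f_{n+1}(x+1))$ and $b = v_x(f_{n+1}(x))$, the quotient above has $x$-adic valuation $a - \min(a,b)$, so $\delta_n = 2$ if and only if $a > b$. Thus the whole theorem reduces to proving that $a > b$ exactly when $3 \mid n+1$.

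First I would handle the easy direction. By Corollary \ref{cor_cor_HMP_gcd}(iv), if $3 \nmid n+1$ then $x \nmid f_{n+1}(x+1)$, so $a = 0 \le b$, and therefore $\delta_n = 0$. This disposes of one implication in a single line.

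For the other direction, suppose $3 \mid n+1$. I would write $n+1 = 2^k b$ with $b$ odd, so that $3 \mid b$ as well. Applying Theorem \ref{HMP_identity} gives
\[
f_{n+1}(x) = x^{2^k - 1}\, f_b^{2^k}(x), \qquad f_{n+1}(x+1) = (x+1)^{2^k - 1}\, f_b^{2^k}(x+1).
\]
Since $b$ is odd, Corollary \ref{cor_cor_HMP_gcd}(i) gives $v_x(f_b(x)) = 0$, hence $b = 2^k - 1$. Since $3 \mid b$, Corollary \ref{cor_cor_HMP_gcd}(iv) gives $v_x(f_b(x+1)) \ge 1$, and because $(x+1)^{2^k-1}$ contributes nothing to the $x$-valuation, $a = 2^k \, v_x(f_b(x+1)) \ge 2^k$. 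Comparing, $a \ge 2^k > 2^k - 1 = b$, so $\delta_n = 2$.

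The main obstacle is the case in which both $2 \mid n+1$ and $3 \mid n+1$: here one might naively worry that $a$ and $b$ are both positive and could coincide, but the Hunziker--Machiavelo--Park factorization gives exact values for both valuations and shows that the $x^{2^k - 1}$ factor in $f_{n+1}(x)$ is always beaten by the $2^k$-fold multiplicity arising from $f_b^{2^k}(x+1)$. The rest of the argument is routine bookkeeping on $x$-adic valuations, once the equivalence with the condition $a > b$ is in place.
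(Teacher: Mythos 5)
Your proof is correct and follows essentially the same route as the paper: both reduce $\delta_n$ to comparing how many times $x$ (equivalently, by symmetry, $x+1$) divides $f_{n+1}(x+1)$ versus $f_{n+1}(x)$, then split off the odd part $n+1 = 2^k b$ and invoke Theorem \ref{HMP_identity} together with Corollary \ref{cor_cor_HMP_gcd}; your explicit valuation bookkeeping is in fact slightly more careful than the paper's. One cosmetic warning: you use the letter $b$ both for the odd part of $n+1$ and for $v_x(f_{n+1}(x))$, so the line ``hence $b = 2^k - 1$'' should be disambiguated.
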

	\begin{proof}
		From our work in Theorem \ref{sutners-thm}, we know that
		\begin{equation*}
			\delta_n = 2\deg\left(x+1,\frac{f_{n+1}(x)}{(f_{n+1}(x),f_{n+1}(x+1))}\right).
		\end{equation*}
		So we see that $\delta_n$ is 2 exactly when $f_{n+1}(x)$ can be divided without remainder by $x+1$ more times than $f_{n+1}(x+1)$.
		
		For $n+1$ is not divisible by 3, Corollary \ref{cor_cor_HMP_gcd} tells us that $f_{n+1}(x)$ is not divisible by $x+1$.
		So in this case, $\delta_n = 0$, as desired.
		
		For $n+1$ divisible by 3, let $n+1 = b \cdot 2^k$ for some integers $b, k \geq 0$ where $b$ is odd.
		Since $n+1$ is divisible by 3, $b$ must also be divisible by 3.
		Applying Corollary \ref{cor1},
		\begin{equation*}
			f_{n+1}(x) = x^{2^k - 1}f_{b}^{2^k}(x) \text{ and } f_{n+1}(x+1) = (x+1)^{2^k - 1}f_{b}^{2^k}(x+1).
		\end{equation*}
		Since $b$ is an odd multiple of 3, Corollary \ref{cor_cor_HMP_gcd} tell us that $x+1$ divides $f_b(x)$, but $x+1$ does not divide $f_b(x+1)$.
		So,
		\begin{equation*}
			f_{n+1}(x) = x^{2^k - 1}(x+1)^{2^k}g^{2^k}(x) \text{ and } f_{n+1}(x+1) = (x+1)^{2^k - 1}x^{2^k}g^{2^k}(x+1),
		\end{equation*} 
		for some $g(x) \in \Z_2[x]$, where $g(x)$ and $g(x+1)$ are both divisible by neither $x$ nor $x+1$.
		So, we see that $f_{n+1}(x)$ can be divided without remainder by $x+1$ one more time than $f_{n+1}(x+1)$.
		So, $\delta_n = 2$, as desired.
	\end{proof}

	\section{Future Work}
	There are many other relationships with $d_n$, many yet to be proven.
	For example, Sutner mentions that for all $k \in \N$, $d_{2^k - 1} = 0$ \cite{Sutner1989}.
	We believe that the following relationships hold, but are unaware of a proof.
	
	\begin{conjecture}\label{conj-all-2}
		There are infinitely many $n$ such that $d_n = 2$.
		In particular, for all $k \in \N$, $d_{2\cdot 3^{k} - 1} = 2$.
	\end{conjecture}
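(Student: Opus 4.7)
The plan is to reduce the conjecture to an auxiliary nullity statement and then attack it via a tripling identity for the Fibonacci polynomials.

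\textit{Reduction.} Setting $n = 3^k - 1$, we have $n+1 = 3^k \equiv 0 \pmod{3}$, so Theorem~\ref{when-is-deltan-2} gives $\delta_{3^k-1} = 2$. Theorem~\ref{sutners-thm} then yields $d_{2\cdot 3^k - 1} = 2 d_{3^k - 1} + 2$, reducing the conjecture to the auxiliary claim $d_{3^k - 1} = 0$ for every $k \geq 1$, i.e.\ $\gcd(f_{3^k}(x), f_{3^k}(x+1)) = 1$ in $\Z_2[x]$.

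\textit{Tripling identity.} I would next establish the identity $f_{3n}(x) = f_n(x)\,(xf_n(x)+1)^2$, which follows in $\Z_2[x]$ from the standard addition formula $f_{3n} = f_{2n+1}f_n + f_{2n}f_{n-1}$ together with $f_{2n}=xf_n^2$, $f_{2n+1}=f_{n+1}^2+f_n^2$, and the Cassini identity $f_{n-1}^2 + xf_nf_{n-1} + f_n^2 = 1$. Writing $g_n(x) := xf_n(x) + 1$ and iterating on powers of $3$ gives $f_{3^k}(x) = P_k(x)^2$ with $P_k(x) := \prod_{j=0}^{k-1} g_{3^j}(x)$.

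\textit{Coprimality of factors.} Since $\gcd(A^2,B^2) = \gcd(A,B)^2$ in $\Z_2[x]$, the auxiliary claim reduces to $\gcd(P_k(x),P_k(x+1)) = 1$, and further to the pairwise coprimality $\gcd(g_{3^i}(x), g_{3^j}(x+1)) = 1$ for all $0 \leq i,j \leq k-1$. The composition rule $f_{mn}(x) = f_m(x)\,f_n(xf_m(x))$ (another standard Fibonacci identity over $\Z_2[x]$) gives the functional equation $g_{3^{j+1}}(x) = g_{3^j}(h(x))$ with $h(y) := y^3 + y$, so that $g_{3^j}(x) = h^{(j)}(x) + 1$. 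The roots of $g_{3^j}$ are then precisely the $j$-th iterated preimages of $1$ under $h$, and because $h(0) = h(1) = 0$ one immediately gets $\gcd(g_{3^i},g_{3^j}) = 1$ for $i \ne j$.

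\textit{Main obstacle.} Ruling out coincidences of the form $g_{3^i}(\alpha) = 0 = g_{3^j}(\alpha+1)$ in the algebraic closure of $\Z_2$ is the heart of the conjecture. My plan is to prove by induction on $k$ the stronger claim that $g_{3^k}(x)$ is irreducible over $\Z_2$; its roots then generate $\mathbb{F}_{2^{3^k}}$, the field equality $\Z_2(\alpha) = \Z_2(\alpha+1)$ forces $i = j$, and the leftover case reduces to $g_{3^k}(x) \ne g_{3^k}(x+1)$, which follows from the nonvanishing of $H_k(x) := g_{3^k}(x) + g_{3^k}(x+1)$ via the recursion $H_{k+1} = H_k\bigl(H_k^2 + h^{(k)}(x)\,h^{(k)}(x+1) + 1\bigr)$ rooted at $H_1 = x(x+1)$. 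The induction step for irreducibility is the genuinely difficult part: by Capelli's theorem it amounts to showing that for any root $\alpha$ of $g_{3^k}$, the trinomial $y^3 + y + \alpha$ is irreducible over $\mathbb{F}_{2^{3^k}}$---equivalently, that $\alpha$ lies outside the image of the map $h\colon \mathbb{F}_{2^{3^k}} \to \mathbb{F}_{2^{3^k}}$. Establishing this ``stability of $h$ at $1$'' is the main obstacle, and I expect it to require a delicate trace or orbit-counting argument rather than a routine computation.
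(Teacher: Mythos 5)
This statement is presented in the paper as an open conjecture --- the authors explicitly write that they ``are unaware of a proof'' --- so there is no proof in the paper to compare against. Your \emph{Reduction} step is precisely the content of the paper's final theorem: using Theorem~\ref{when-is-deltan-2} and Theorem~\ref{sutners-thm} to show that Conjecture~\ref{conj-all-2} is equivalent to the statement $d_{3^k-1}=0$ (the $a=3$ case of Conjecture~\ref{conj-powers}). Everything you do after that goes beyond the paper, and the algebra I can check is correct: the tripling identity $f_{3n}(x)=f_n(x)\,(xf_n(x)+1)^2$ does follow from the addition and Cassini identities in $\Z_2[x]$, the iterated factorization $f_{3^k}=\bigl(\prod_{j<k} g_{3^j}\bigr)^2$ with $g_{3^j}(x)=h^{(j)}(x)+1$, $h(y)=y^3+y$, checks out on small cases ($f_3$, $f_9$), and the argument that distinct $g_{3^i}$, $g_{3^j}$ are coprime via $h(1)=h(0)=0$ is sound. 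This is a genuinely interesting structural reduction that the paper does not contain.

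However, the proposal is not a proof, and you say so yourself. The entire conjecture now rests on showing $\gcd\bigl(g_{3^i}(x),\,g_{3^j}(x+1)\bigr)=1$, which you propose to obtain from the irreducibility of every $g_{3^k}$ over $\Z_2$ --- and that induction step (equivalently, that no root of $g_{3^k}$ lies in the image of $h$ on $\mathbb{F}_{2^{3^k}}$) is left entirely open. Questions of this type, the ``stability'' of an iterated polynomial over a finite field, are notoriously delicate and do not follow from any routine argument; there is no a priori reason the irreducibility cannot fail at some level $k$, and if it does, your route to the cross-coprimality collapses (one would then need to rule out a common root of $h^{(i)}(\alpha)=1$ and $h^{(j)}(\alpha+1)=1$ directly, which is the same difficulty in different clothing). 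So the heart of the conjecture is exactly the step you defer. As a research plan this is a worthwhile reformulation --- it converts the conjecture into a concrete arboreal/irreducibility question about the single map $y\mapsto y^3+y$ --- but as a proof it has an unclosed gap at its critical step, and the conjecture remains unproven.
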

	This conjecture is similar to Sutner's result that shows there are infinitely many $n$ such that $d_n = 0$.
	
	\begin{conjecture}\label{conj-powers}
		Let $a$ be an odd natural number.
		If $a$ is not divisible by 21, then for all $k \in \N$,
		\begin{equation*}
			d_{a^k - 1} = d_{a-1}.
		\end{equation*}
	\end{conjecture}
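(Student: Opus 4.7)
The plan is to use Theorem~\ref{Sutner_gcd} to reduce the conjecture to a statement about which irreducibles of $\Z_2[x]$ appear in $\gcd(f_{a^k}(x), f_{a^k}(x+1))$ versus $\gcd(f_a(x), f_a(x+1))$. First I would derive the multiplicativity identity
\begin{equation*}
    f_{mn}(x) = f_n(x)\cdot f_m\bigl(xf_n(x)\bigr)
\end{equation*}
by writing $f_n(x) = (\alpha^n + \beta^n)/(\alpha+\beta)$ over $\Z_2$ with $\alpha+\beta = x$ and $\alpha\beta = 1$, and then reapplying the same formula with $(u,v) = (\alpha^n, \beta^n)$, using $u+v = xf_n(x)$ and $uv = 1$. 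Iterating with $n = a^j$ and $m = a$ gives $f_{a^k}(x) = \prod_{j=0}^{k-1} f_a(xf_{a^j}(x))$, so $f_a(x)\mid f_{a^k}(x)$ and $f_a(x+1)\mid f_{a^k}(x+1)$, yielding the easy direction $d_{a-1}\le d_{a^k-1}$ for free; the content of the conjecture is the reverse inequality.

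For each irreducible $p\in\Z_2[x]$, let $\rho(p)$ denote the smallest $n\ge 1$ with $p\mid f_n$; by Corollary~\ref{cor_HMP_gcd} we have $p\mid f_n$ iff $\rho(p)\mid n$. Writing $\sigma$ for the involution $x\mapsto x+1$, the statement $p(x)\mid f_n(x+1)$ is equivalent to $\sigma(p)\mid f_n(x)$, hence to $\rho(\sigma p)\mid n$. Thus $p\mid \gcd(f_n(x), f_n(x+1))$ iff $\operatorname{lcm}(\rho(p),\rho(\sigma p))\mid n$, and the conjecture reduces to two sub-claims: the \emph{lcm claim}, that for every irreducible $p$ and every $k\ge 1$, $\operatorname{lcm}(\rho(p),\rho(\sigma p))\mid a^k$ forces $\operatorname{lcm}(\rho(p),\rho(\sigma p))\mid a$; and the \emph{multiplicity claim}, that the multiplicity of each $p$ in $\gcd(f_{a^k}(x), f_{a^k}(x+1))$ matches its multiplicity in $\gcd(f_a(x), f_a(x+1))$, which can be tracked through the iterated product above.

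The hypothesis $21\nmid a$ is not cosmetic. Direct computation gives $f_7 = (x^3+x^2+1)^2$ and $f_9 = (x+1)^2(x^3+x+1)^2$, with $\sigma$ swapping the two cubics, so $\rho(x^3+x^2+1) = 7$ while $\rho(x^3+x+1) = 9$; their lcm is $63 = 3^2\cdot 7$. Since $63\mid 21^2$ but $63\nmid 21$, the factor $x^3+x^2+1$ appears in $\gcd(f_{441}(x), f_{441}(x+1))$ but not in $\gcd(f_{21}(x), f_{21}(x+1))$, so $d_{440}>d_{20}$ and the conjecture genuinely fails when $21\mid a$. For $a$ odd with $21\nmid a$, the integer $63$ cannot divide any $a^k$, so this particular obstruction disappears.

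The hard part is showing that $(7,9)$ is essentially the only such obstruction: one must classify all pairs $(\rho(p), \rho(\sigma p))$ and verify that whenever $21\nmid a$, no pair has lcm dividing $a^k$ but not $a$. A natural route parametrizes irreducible factors of $f_n$ via the correspondence sending a root $r$ of $f_n$ to an element $\alpha \in \overline{\mathbb{F}}_2^{\times}$ with $r = \alpha+\alpha^{-1}$ and $\alpha^{2n} = 1$; this identifies $\rho(p)$ with a simple function of the multiplicative order of $\alpha$ and would reduce the lcm claim to an arithmetic statement about how $\sigma$ acts on these orders. The multiplicity claim is a separate, independently delicate issue that will likely require inductive control over the $p$-adic valuations of the factors $f_a(xf_{a^j}(x))$ appearing in the product.
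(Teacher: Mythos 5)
The first thing to note is that the paper does not prove this statement: it is Conjecture~\ref{conj-powers}, explicitly presented as open (``we are unaware of a proof''), and the only thing established about it is that its $a=3$ case is equivalent to Conjecture~\ref{conj-all-2}. So there is no paper proof to compare against, and your attempt must be judged as a standalone proof --- which it is not. What you have is a correct reduction plus a program. The pieces you actually carry out are sound: the identity $f_{mn}(x) = f_n(x)\,f_m\bigl(xf_n(x)\bigr)$ is valid over $\Z_2$ (the $\alpha,\beta$ computation goes through, signs being irrelevant in characteristic 2, and it telescopes to $f_{a^k}(x)=\prod_{j=0}^{k-1} f_a\bigl(xf_{a^j}(x)\bigr)$), the easy inequality $d_{a-1}\le d_{a^k-1}$ follows, and the membership criterion ``$p \mid \gcd\bigl(f_n(x),f_n(x+1)\bigr)$ iff $\operatorname{lcm}\bigl(\rho(p),\rho(\sigma p)\bigr) \mid n$'' is a correct consequence of Corollary~\ref{cor_HMP_gcd}. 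But the two sub-claims you isolate are exactly the open content of the conjecture, and you prove neither. The lcm claim requires classifying all pairs $\bigl(\rho(p),\rho(\sigma p)\bigr)$ and showing $(7,9)$ is essentially the only asymmetric orbit whose lcm can divide $a^k$ without dividing $a$; your suggested parametrization via $r=\alpha+\alpha^{-1}$, $\alpha^{2n}=1$ is the standard starting point, but the action of $\sigma$ on the corresponding multiplicative orders is precisely the part nobody knows how to control --- it is the same difficulty that makes the distribution of $d_n$ hard in general. The multiplicity claim is also not routine bookkeeping: since your easy direction gives divisibility of one gcd by the other, the conjecture is equivalent to the full equality $\gcd\bigl(f_{a^k}(x),f_{a^k}(x+1)\bigr)=\gcd\bigl(f_a(x),f_a(x+1)\bigr)$ of monic polynomials, and tracking the valuation of $p$ in the factors $f_a\bigl(xf_{a^j}(x)\bigr)$ requires understanding divisibility of composite substitutions, which is as hard as the original question.

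That said, your sharpness analysis is correct and is genuinely valuable content absent from the paper, which never explains where the ``21'' comes from: indeed $f_7=(x^3+x^2+1)^2$ and $f_9=(x+1)^2(x^3+x+1)^2$, the substitution $\sigma$ swaps the two cubics, so $\rho(x^3+x^2+1)=7$ and $\rho(x^3+x+1)=9$ with $\operatorname{lcm}=63$, whence $x^3+x^2+1$ divides $\gcd\bigl(f_{441}(x),f_{441}(x+1)\bigr)$ but not $\gcd\bigl(f_{21}(x),f_{21}(x+1)\bigr)$, and since the smaller gcd divides the larger, $d_{440}>d_{20}$ strictly. One overstatement: this refutes the statement for $a=21$, and more generally whenever $21\mid a$ but $63\nmid a$, not ``when $21\mid a$'' wholesale --- for $a=63$ the obstruction $63\mid a$ is already present at $k=1$ and your computation shows no failure. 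In sum: a sound skeleton and a nice explanation of the hypothesis, but with two acknowledged, unfilled gaps it leaves the conjecture exactly as open as the paper does.
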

	Goshima and Yamagishi conjectured a similar statement on tori instead of grids and for $a$ prime \cite{GOSHIMAYAMAGISHI2010}.
	
	\begin{theorem}
		The case of $a=3$ for Conjecture \ref{conj-powers} and \ref{conj-all-2} are equivalent.
	\end{theorem}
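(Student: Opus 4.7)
The plan is to bridge the two conjectures using only two results already proved in the paper: the recurrence $d_{2n+1} = 2d_n + \delta_n$ from Theorem \ref{sutners-thm}, and the divisibility criterion that $\delta_n = 2$ precisely when $3 \mid n+1$ from Theorem \ref{when-is-deltan-2}. The key observation is that the indices $3^k - 1$ appearing in Conjecture \ref{conj-powers} and $2 \cdot 3^k - 1$ appearing in Conjecture \ref{conj-all-2} are related exactly by the substitution $n \mapsto 2n + 1$, so a single specialization of the recurrence should collapse one conjecture into the other.

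First I would evaluate the anchor $d_{a-1}$ in Conjecture \ref{conj-powers} for $a = 3$. A direct computation via Theorem \ref{Sutner_gcd} gives $f_3(x) = (x+1)^2$ and $f_3(x+1) = x^2$ in $\Z_2[x]$, so $d_2 = \deg \gcd((x+1)^2, x^2) = 0$. Thus the $a = 3$ case of Conjecture \ref{conj-powers} says precisely that $d_{3^k - 1} = 0$ for all $k$.

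Next I would set $n = 3^k - 1$ in Theorem \ref{sutners-thm} to obtain
\begin{equation*}
    d_{2 \cdot 3^k - 1} = 2 d_{3^k - 1} + \delta_{3^k - 1}.
\end{equation*}
Since $(3^k - 1) + 1 = 3^k$ is divisible by $3$ for every $k \geq 1$, Theorem \ref{when-is-deltan-2} forces $\delta_{3^k - 1} = 2$, and the recurrence simplifies to $d_{2 \cdot 3^k - 1} = 2 d_{3^k - 1} + 2$. Because $d_{3^k - 1}$ is a non-negative integer, the equality $d_{2 \cdot 3^k - 1} = 2$ holds if and only if $d_{3^k - 1} = 0$, which is the desired biconditional between the two conjectures. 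I do not foresee any genuine obstacle here: every ingredient is already in hand, and the only step requiring a moment of care is confirming that $\delta_{3^k - 1} = 2$ at every relevant index, which is immediate from $3 \mid 3^k$.
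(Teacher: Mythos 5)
Your proposal is correct and follows essentially the same route as the paper: substitute $n = 3^k - 1$ into the recurrence of Theorem \ref{sutners-thm}, use Theorem \ref{when-is-deltan-2} to force $\delta_{3^k-1}=2$, and read off the equivalence. Your version is slightly more careful in two spots the paper glosses over — you verify $d_2 = 0$ by explicit computation and you justify the reverse implication via non-negativity of $d_{3^k-1}$ rather than saying ``apply the same steps in reverse'' — but these are refinements of the same argument, not a different one.
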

	\begin{proof}
		For $a=3$, Conjecture \ref{conj-powers} says that for all $k \in \N$,
		\begin{equation*}
			d_{3^k - 1} = d_{3-1} = 0.
		\end{equation*}
		Since $3^k$ is divisible by 3, Theorem \ref{when-is-deltan-2} tells us that $\delta_{3^k - 1} = 2$.
		So, applying Theorem \ref{sutners-thm},
		\begin{equation*}
			d_{2\cdot3^{k} - 1} = 2d_{3^k - 1} + \delta_{3^k - 1} = 2,
		\end{equation*}
		exactly what Conjecture \ref{conj-all-2} states.
		One can apply all same steps in reverse to shows that Conjecture \ref{conj-all-2} implies the $a=3$ case of Conjecture \ref{conj-powers}.
	\end{proof}
	
	\newpage
	\bibliography{refs.bib}
	\bibliographystyle{amsplain}
\end{document}